\numberwithin{equation}{section}
\theoremstyle{definition}
\newtheorem{theorem}{\bf Theorem}[section]
\newtheorem*{theorem1}{\bf Theorem A}
\newtheorem*{theorem2}{\bf Theorem B}
\newtheorem{lemma}[theorem]{\bf Lemma}
\theoremstyle{definition}
\newtheorem{proposition}[theorem]{\bf Proposition}
\newtheorem{question}{\bf Question}
\newcommand{\mm}[1]{\mathrm{#1}}
\newcommand{\mb}[1]{\mathbb{#1}}
\newcommand{\mc}[1]{\mathcal{#1}}
\begin{document}

\title[{\fontsize{7}{7}\selectfont Manifold structures on highly connected Poincar\'e complexes}]{Manifold structures on highly connected Poincar\'e complexes}

\author[{\fontsize{7}{7}\selectfont Wen Shen}]{Wen Shen}
\keywords{Spivak normal fibration, Kervaire invariant}

\email{shenwen121212@163.com} 


\begin{abstract}
This paper constructs numerous examples of highly connected Poincar\'{e} complexes, each homotopy equivalent to a topological manifold yet not homotopy equivalent to any smooth manifold. Furthermore, we determine the homotopy type of any closed $2k$-connected framed $(4k+2)$-manifold with Kervaire invariant one for $k=7,15,31$. 	
\end{abstract}

\subjclass[2020]{Primary 55R37, 55S35}

\maketitle

\section{Introduction}\label{intro-Sec1}

From the classic theorem \cite{KirSie1969}, every compact topological manifold is homotopy equivalent to a finite CW complex. Conversely, we raise the following question:
\begin{question}\label{question1}
	Is a finite CW complex homotopy equivalent to a topological manifold or a smooth manifold?
\end{question} 

 On the other hand, orientable closed manifolds satisfy Poincar\'e duality. This property leads us to focus on finite CW complexes that    
 satisfy Poincar\'e duality. A finite CW complex $X$ is called a $k$-dimensional Poincar\'e complex if there exists a fundamental class
  $[X]\in H_k(X;\mb{Z})$ such that the cap product $[X] \cap : H^q(X;\mb{Z})\to  H_{k-q}(X;\mb{Z})$ is an isomorphism for all $q$.  
 
 Let $\pi:E\to X$ be an $S^{m-1}$-fibration $\xi$. We denote the mapping cone $X\cup_\pi CE$ by $T(\xi)$ (the Thom space of the fibration). By \cite[Lemma I.4.3]{BrowW}, there exists an element $U\in H^m(T(\xi);\mb{Z})$ such that the cup product with $U$ 
$$\cup U:H^i(X;\mb{Z})\to H^{m+i}(T(\xi);\mb{Z})$$
is an isomorphism for all $i$ (i.e. the Thom isomorphism holds). 

According to \cite{Spi1967}, for a simply connected $k$-dimensional Poincar\'e complex $X$, there exists an $S^{m-1}$-fibration $\xi$ over $X$ 
such that every element in $H_{m+k}(T(\xi);\mb{Z})$ is a spherical class where 
\begin{enumerate}
\item $m$ is much larger than $k$ ($m\gg k$). 
		\item $H_{m+k}(T(\xi);\mb{Z})\cong H_{k}(X;\mb{Z})\cong \mb{Z}$ (Thom isomorphism)
		\item For space $Y$, a class in $H_i(Y;\mb{Z})$ is spherical if it lies in the image of the Hurewicz homomorphism. 
\end{enumerate}
We refer to such an $S^{m-1}$-fibration over $X$ as a Spivak normal fibration of $X$. In \cite{Spi1967}, Spivak showed that the Spivak normal fibration of $X$ is unique up to stable fiber homotopy equivalence. 

In differential topology and surgery theory, researchers study the obstructions to lifting the structure of a Poincar\'e complex to the stronger structure exhibited by a manifold.
 
 Let $\mm{BO}$, $\mm{BTOP}$ and $\mm{B}G$ be the classifying spaces for stable vector bundles, stable topological microbundles, and stable spherical fibrations respectively \cite[pp.232]{Rudyak}. There exist natural forgetful maps 
$$\mm{BO}\stackrel{\alpha^{\mm{O}}_{\mm{TOP}}}{\longrightarrow}\mm{BTOP}\stackrel{\alpha^{\mm{TOP}}_{{G}}}{\longrightarrow}\mm{B}G$$
 
 Novikov and Browder presented the following theorem independently.
 
 \begin{theorem}\label{Smooth}\cite{BrowW} \cite{Novi}
 	Let $X$ be a simply connected Poincar$\mm{\acute{e}}$ complex of dimension $m \ge  5$ with Spivak normal fibration which is stably fiber homotopy equivalent to the sphere bundle of a vector bundle $\eta$. If 
 	\begin{itemize}
 		\item[(1)] $m$ is odd, or
 		\item[(2)] $m = 4k$ and $\mm{Index}X = (L_k(p_1 (\eta^{-1}),\cdots , p_k (\eta^{-1})))[X]$,
 	\end{itemize}
then there is a homotopy equivalence $f : M\to X$, $M$ a smooth $m$-manifold, such that $\nu = f^\ast(\eta)$ is a normal bundle of $M$. \end{theorem}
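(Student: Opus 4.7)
The plan is to construct a degree-one normal map by the Thom--Pontryagin construction and then perform surgery on the source to convert it into a homotopy equivalence. Stably enlarging $\eta$ by a trivial bundle we may assume its rank $N$ is much larger than $m$. Since the Spivak normal fibration of $X$ is stably fiber homotopy equivalent to the sphere bundle of $\eta$, the Thom space $T(\eta)$ carries a spherical top class by the defining property recalled above, yielding a map $\Phi\colon S^{m+N}\to T(\eta)$ representing that class. Making $\Phi$ transverse to the zero section $X\subset T(\eta)$ via Thom transversality, the preimage $M:=\Phi^{-1}(X)$ is a smooth closed $m$-manifold whose normal bundle in $S^{m+N}$ is canonically identified with $g^{\ast}\eta$ for $g:=\Phi|_M\colon M\to X$. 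Because $\Phi$ sends a generator of $H_{m+N}(S^{m+N};\mb{Z})$ to the generator of $H_{m+N}(T(\eta);\mb{Z})$, the Thom isomorphism forces $g$ to have degree one; together with the canonical bundle map $\widehat{g}\colon\nu_M\to\eta$ this gives a degree-one normal map $(g,\widehat{g})$.

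Next, I would improve $g$ by surgery below the middle dimension. Simply connected surgery theory, together with $m\ge 5$ so the Whitney trick applies, shows that finitely many surgeries on framed embedded spheres $S^i\hookrightarrow M$ with $2i<m$ produce a normal cobordant normal map $(g',\widehat{g}')$ in which $g'$ is $\lfloor m/2\rfloor$-connected. The remaining obstruction to turning $g'$ into a homotopy equivalence lives in the middle dimension and is captured by the classical simply connected surgery obstruction in $L_m(\mb{Z})$, computed by Kervaire and Milnor: $L_{2k+1}(\mb{Z})=0$, $L_{4k}(\mb{Z})\cong\mb{Z}$ detected by the signature of the middle-dimensional kernel intersection form, and $L_{4k+2}(\mb{Z})\cong\mb{Z}/2$ detected by the Arf/Kervaire invariant of the kernel quadratic refinement.

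Hence in case (1), $m$ odd, the obstruction vanishes automatically and surgery can be completed to deliver a homotopy equivalence $f\colon M\to X$ with $f^{\ast}\eta$ the stable normal bundle of $M$. In case (2), $m=4k$, the middle-dimensional kernel $K_{2k}(M;\mb{Z})\subset H_{2k}(M;\mb{Z})$ carries a nondegenerate symmetric intersection form whose signature equals $\mm{Index}\,M-\mm{Index}\,X$. By the Hirzebruch signature theorem applied to $M$, the identification $\nu_M\cong g^{\ast}\eta$, naturality of Pontryagin classes, and $g_{\ast}[M]=[X]$, one obtains $\mm{Index}\,M=L_k(p_1(-g^{\ast}\eta),\dots,p_k(-g^{\ast}\eta))[M]=L_k(p_1(\eta^{-1}),\dots,p_k(\eta^{-1}))[X]$, and the hypothesis in (2) equates this with $\mm{Index}\,X$, so the surgery obstruction vanishes and surgery can again be completed. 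The main obstacle throughout is the middle-dimensional surgery step; the omitted case $m=4k+2$ is precisely where the Kervaire invariant obstruction in $L_{4k+2}(\mb{Z})$ may be nonzero, which is the phenomenon the remainder of this paper is devoted to.
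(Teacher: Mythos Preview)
The paper does not supply its own proof of this theorem; it is quoted as a foundational result with citations to \cite{BrowW} and \cite{Novi}. Your sketch is precisely the standard argument from those references---construct a degree-one normal map via Thom transversality using the vector-bundle reduction of the Spivak fibration, surger below the middle dimension, and identify the remaining obstruction in $L_m(\mb{Z})$, which vanishes under hypothesis (1) or (2)---and is correct.
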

 
  The above theorem can be extended to the topological case by  
 \cite{KirSie1969}.
 \begin{theorem}\cite{Brow1971}\label{PLTOP} 
 Let $X$ be a $1$-connected Poincar$\mm{\acute{e}}$ complex of dimension $m \ge  5$. Then $X$ is homotopy equivalent to a topological manifold if and only if the classifying map $X\to \mm{B}G$ of its Spivak normal fibration has a homotopy lift to $\mm{BTOP}$. 
  \end{theorem}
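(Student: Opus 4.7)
The plan is to establish the two directions separately; the ``only if'' direction amounts to the existence of topological normal microbundles, while the ``if'' direction combines Pontryagin--Thom with topological surgery.

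For the forward direction, suppose $f: M \to X$ is a homotopy equivalence from a closed topological manifold $M$. By Kirby--Siebenmann, any closed topological manifold admits a stable topological normal microbundle $\nu_M$ upon embedding into Euclidean space, and the sphere bundle of $\nu_M$ is stably fiber homotopy equivalent to the Spivak normal fibration of $M$ (by the uniqueness part of \cite{Spi1967}). Transporting $\nu_M$ across $f$ yields a classifying map $X \to \mm{BTOP}$ that lifts the Spivak classifying map under $\alpha^{\mm{TOP}}_{G}$.

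For the converse, let $\tilde{\nu}: X \to \mm{BTOP}$ lift the Spivak classifying map, corresponding to a stable topological microbundle $\eta$ over $X$ whose sphere bundle is stably fiber homotopy equivalent to the Spivak fibration $\xi$. Writing $N$ for the fiber dimension, the spherical top class of $T(\xi) \simeq T(\eta)$ is represented by a map $\phi: S^{N+m} \to T(\eta)$. Topological transversality (Kirby--Siebenmann) allows us to homotope $\phi$ to be transverse to the zero section $X \hookrightarrow T(\eta)$, producing a closed topological $m$-manifold $W := \phi^{-1}(X)$ and a degree one normal map $g: W \to X$ covered by a bundle map $\nu_W \to \eta$.

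It remains to perform topological surgery on $g$ to obtain a homotopy equivalence. The obstruction is the Wall class $\sigma(g) \in L_m(\mb{Z})$, which depends on the choice of lift $\tilde{\nu}$; the set of lifts forms a torsor over $[X, G/\mm{TOP}]$. By the Sullivan identification $\pi_m(G/\mm{TOP}) \cong L_m(\mb{Z})$ for $m \ge 5$, composed with any degree one collapse $X \to S^m$ onto a top cell, the induced map $[X, G/\mm{TOP}] \to L_m(\mb{Z})$ is surjective. Hence we may adjust $\tilde{\nu}$ so that the new surgery obstruction cancels $\sigma(g)$, and topological surgery in dimensions $\ge 5$ then produces a homotopy equivalence $W' \to X$ with $W'$ a closed topological manifold. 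The main obstacle is the combined use of topological transversality and the realization of arbitrary surgery obstructions by varying the microbundle lift; both are deep consequences of the Kirby--Siebenmann foundations of topological manifold theory, without which neither the Pontryagin--Thom step nor the topological surgery exact sequence would be available.
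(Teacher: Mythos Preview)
The paper does not give its own proof of this statement: Theorem~\ref{PLTOP} is quoted from \cite{Brow1971} (with the extension to the topological category attributed to \cite{KirSie1969}) and is used as a black box in the proof of Theorem~A(1). There is therefore nothing in the paper to compare your argument against.

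That said, your outline is the standard one and is essentially correct. A couple of small points worth tightening: the identification $\pi_m(G/\mathrm{TOP})\cong L_m(\mb{Z})$ is more accurately attributed to Kirby--Siebenmann (building on Sullivan's analysis of $G/\mathrm{PL}$) rather than to Sullivan alone; and the step where you ``adjust $\tilde\nu$ so that the new surgery obstruction cancels $\sigma(g)$'' implicitly uses that modifying the lift by $\eta\in[X,G/\mathrm{TOP}]$ changes the surgery obstruction by the surgery obstruction of $\eta$ itself, together with the fact that in the topological category the map $\pi_m(G/\mathrm{TOP})\to L_m(\mb{Z})$ is an isomorphism (this is precisely what fails smoothly and accounts for the extra hypothesis in Theorem~\ref{Smooth}). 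With those clarifications your sketch matches the argument one finds in Browder's paper and in standard surgery references.
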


In this paper, we focus on CW complexes of the form:
\begin{equation}
	X=(S_1^n\vee S_2^n)\cup_{\beta} e^{2n}\quad  \label{inXconstruction}
\end{equation}
By Hilton's work, we have
\begin{equation}
	\pi_{2n-1}(S^n_1\vee S_2^n)=\pi_{2n-1}(S^n_1)\oplus  \pi_{2n-1}(S_2^n)\oplus \mb{Z} \label{homotopywedgeSn}
\end{equation}
Let $\iota^i_n$ represent a generator of $\pi_n(S^n_i)$ for $i=1,2$. The generator of the direct summand $\mb{Z}$ is represented by the Whitehead product $[\iota^1_n,\iota^2_n]$.
Thus, the attaching map $\beta:S^{2n-1}\to S_1^n\vee S_2^n$ can be expressed as a triple $(\beta_1,\beta_2,\beta_3)\in \pi_{2n-1}(S^n_1\vee S^n_2)$ where $\beta_1,\beta_2,\beta_3$ 
 are its three components. 

Now we state the main theorem:
\begin{theorem1}
Let $n$ be odd. Let $X$ denote the CW complex $$(S^{n}_1\vee S_2^{n})\cup_{([\iota_n^1,\iota_n^1],[\iota_n^2,\iota_n^2],[\iota_n^1,\iota_n^2])}e^{2n}$$
where $\iota_n^i\in \pi_{2n-1}(S^n_i)$ is a generator for $i=1,2$; $[-,-]$ denotes the Whitehead product. The following statements hold:
\begin{enumerate}
	\item $X$ is homotopy equivalent to a closed topological manifold. Moreover, for $n\ge 5$, the topological manifold is unique up to homeomorphism.
	\item If $n\ne 1,3,7,15,31,63$, $X$ is not homotopy equivalent to any smooth manifold.
	\item If $n=15,31,63$, $X$ is homotopy equivalent to a closed framed manifold with  Kervaire invariant one.
\end{enumerate}   
\end{theorem1}

For odd $n\ne 2^i-1$, the CW complex $X$ defined in Theorem A is known to be homotopy equivalent to a topological manifold by \cite{Shen}. This paper focuses on the case where $n$ takes the form $2^i-1$ for $i\ge 4$. 

For $n=1,3,7$, Adams \cite{Adams1960} showed that the Whitehead product $[\iota_n,\iota_n]$ is the zero element in $\pi_{2n-1}(S^n)$. By \cite[Corollary 3.5]{WhiGW1946},
 the CW complex $X$ as defined in Theorem A is homotopy equivalent to $S^n\times S^n$.
 
 Clearly, (2) and (3) of Theorem A cannot be directly derived from Theorem \ref{Smooth}. Instead, we will employ techniques from homotopy theory and foundational results in surgery theory to establish them.   

In 1960, Kervaire \cite{Kerva1960} introduced an invariant for almost framed $(4k + 2)$-manifolds. This invariant, known as the Kervaire invariant, can be defined in a general manner: it corresponds to the surgery obstruction in the $(4k+2)$-dimensional framed bordism group (see \cite{KervaMilnor, BrowW}).
 Based on previous research: Browder \cite{BrowderW1969}, Mahowald-Tangora \cite{MahT}, Barratt-Jones-Mahowald \cite{BaJM}, Hill-Hopkins-Ravenel \cite{HiHR}, and Lin-Wang-Xu \cite{LWX}, the dimensions for which there exist framed manifolds of Kervaire invariant one are $2, 6, 14, 30, 62$, and $126$.

In dimensions $2$, $6$, and $14$, the sphere products $S^1\times S^1$, $S^3\times S^3$, and $S^7\times S^7$ can be framed to have Kervaire invariant one.
In dimension $30$, Jones explicitly constructed a framed manifold of Kervaire invariant one in \cite{Jones}.
 From (3) of Theorem A, we obtain one of homotopy types of framed $2n$-manifolds with Kervaire invariant one for $n=15,31,63$.

Let $k\ge 1$.
By surgery theory \cite{KervaMilnor}, every bordism class in the framed bordism group $\Omega_{4k+2}^{\mm{fr}}$ can be represented by a closed, $2k$-connected smooth $(4k+2)$-manifold with a given framing. Next, we give the homotopy type of
such a manifold with Kervaire invariant one.  



\begin{theorem2}
	Let $n=15,31,63$. Every closed, $(n-1)$-connected framed $2n$-manifold $M$ with Kervaire invariant one is homeomorphic to $(\#_{i=1}^{s-1}S_i^n\times S_i^n)\#N$, where 
	\begin{itemize}
		\item $N$ is the topological $2n$-manifold homotopy equivalent to the CW complex defined in Theorem A;
		\item $2s$ equals the $n$-th Betti number of $M$. 
	\end{itemize}
\end{theorem2}

The plan of this paper is as follows: In Section \ref{TKIn}, we define the topological Kervaire invariant on the CW complex $X$ as \eqref{inXconstruction}. In Section \ref{SNfi}, we analyze the Spivak normal fibration of $X$. Finally, we use the topological Kervaire invariant to characterize the obstruction for $X$ to be homotopy equivalent to a smooth manifold and prove Theorem A and Theorem B in Section \ref{Obs-section}. 

\section{Topological Kervaire invariant}\label{TKIn}

In this section, by following \cite{Kerva1960}, we primarily construct topological Kervaire invariants on certain CW complexes of the form \eqref{inXconstruction}. Before the construction, we first make some preparatory remarks. 

Let $n$ be an odd integer. Recall the EHP sequence \cite{Jame} 
$$\pi_{2n}(S^n)\stackrel{\Sigma}{\to} \pi_{2n+1}(S^{n+1})\stackrel{\mm{H}}{\to}\pi_{2n+1}(S^{2n+1})\to \pi_{2n-1}(S^{n})\stackrel{\Sigma}{\to } \pi_{n-1}^s\to 0$$
where $\mm{H}$ is defined by the Hopf invariant, $\Sigma$ is the suspension operation, $\pi_{n-1}^s$ denotes the $(n-1)$-dimensional stable homotopy group of the sphere. By Serre's work, $\pi_i(S^m)$ is finite except in two specific cases:  when $i = m$,
and when $m$ is even and $i=2m-1$ (see \cite{Toda}). Consequently, $\pi_{2n+1}(S^{n+1})$ has precisely one $\mb{Z}$-direct summand that can be detected by the Hopf invariant. It is well known that there exists $\alpha\in \pi_{2n+1}(S^{n+1})$ with Hopf invariant $2$ (see \cite{Hatcher}). 

By the well-known result in \cite{Adams1960}, for odd integers $n\ne 1,3,7$, there is no element of Hopf invariant one in $\pi_{2n+1}(S^{n+1})$. Using this fact together with the EHP sequence, we obtain the following lemma:
\begin{lemma}\label{pi2n-1Snexac}
	Let $n\ne 1,3,7$ be odd. The following sequence
	$$0\to \mb{Z}/2\to \pi_{2n-1}(S^{n})\stackrel{\Sigma}{\to } \pi_{n-1}^s\to 0$$
	is exact.
\end{lemma}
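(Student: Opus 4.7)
The plan is to extract the desired short exact sequence directly from the EHP sequence displayed in the preamble. Reading that sequence from the right, exactness says that $\Sigma\colon \pi_{2n-1}(S^n)\to\pi_{n-1}^s$ is surjective and that its kernel equals the image of the connecting homomorphism $P\colon \pi_{2n+1}(S^{2n+1})\to \pi_{2n-1}(S^n)$. So the only substantive content is to identify $\mathrm{im}(P)$ with $\mathbb{Z}/2$.

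First I would recall that $\pi_{2n+1}(S^{2n+1})\cong\mathbb{Z}$, generated by the identity class $\iota_{2n+1}$, and that by exactness of the EHP sequence at this group one has $\ker(P)=\mathrm{im}(H)$. Hence $\mathrm{im}(P)\cong \mathbb{Z}/\mathrm{im}(H)$ as an abstract abelian group, and the problem reduces to pinning down $\mathrm{im}(H)$ as a subgroup of $\mathbb{Z}$.

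The two ingredients needed to compute $\mathrm{im}(H)$ are both already supplied in the paragraph preceding the lemma. On the one hand, there exists $\alpha\in\pi_{2n+1}(S^{n+1})$ with $H(\alpha)=2$, so that $2\mathbb{Z}\subseteq \mathrm{im}(H)$. On the other hand, Adams' theorem on the Hopf invariant one problem asserts that for $n\notin\{1,3,7\}$ no element of $\pi_{2n+1}(S^{n+1})$ has Hopf invariant $\pm 1$, so that $\mathrm{im}(H)\subseteq 2\mathbb{Z}$. Combining these inclusions yields $\mathrm{im}(H)=2\mathbb{Z}$, and therefore $\mathrm{im}(P)\cong \mathbb{Z}/2$.

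To conclude, I would observe that $\mathrm{im}(P)$ is the cyclic subgroup of $\pi_{2n-1}(S^n)$ generated by the Whitehead product $[\iota_n,\iota_n]$, which by the computation above has order exactly $2$; exactness of the EHP sequence at $\pi_{2n-1}(S^n)$ and at $\pi_{n-1}^s$ then assembles into the short exact sequence
$$0 \to \mathbb{Z}/2 \to \pi_{2n-1}(S^n) \xrightarrow{\Sigma} \pi_{n-1}^s \to 0.$$
There is no real obstacle here: the lemma is essentially a bookkeeping consequence of the EHP sequence combined with Adams' theorem. The only point that requires care is distinguishing \emph{no element of Hopf invariant one} from \emph{vanishing of the Hopf invariant map}; this is precisely what the existence of an element of Hopf invariant two rules out, so that $\mathrm{im}(P)$ has order two rather than being all of $\mathbb{Z}$ or trivial.
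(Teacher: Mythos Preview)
Your proof is correct and follows exactly the approach the paper indicates: the paper does not write out an explicit proof but simply states that the lemma follows from the EHP sequence together with Adams' theorem and the existence of an element of Hopf invariant two, which is precisely the argument you have spelled out in detail. Your additional remark that $\mathrm{im}(P)$ is generated by $[\iota_n,\iota_n]$ is also correct and anticipates what the paper establishes separately in the next lemma.
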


Let $\Omega=\Omega S^{n+1}$ denote the loop space of the $(n+1)$-sphere $S^{n+1}$. It is straightforward to verify that
$H^n(\Omega;\mb{Z})=\mb{Z}\langle e_n\rangle $, $H^{2n}(\Omega;\mb{Z})=\mb{Z}\langle e_{2n}\rangle$, and $H^{i}(\Omega;\mb{Z})=0$ for $n<i<2n$. 

By the minimal cell structure \cite[Proposition 4.1]{Wall1965}, the loop space $\Omega=\Omega S^{n+1}$ has the following weak homotopy type: 
\begin{equation}
	W(\Omega)= S^n\cup_\alpha e^{2n}\cup \cdots \label{cellOmega}
\end{equation}
where $\alpha$ is the attaching map, and the dimensions of the omitted cells are greater then $2n$. 
\begin{lemma}\label{alpha}
Let $n\ne 1,3,7$ be odd. 
\begin{enumerate}
	\item The attaching map $\alpha$ in \eqref{cellOmega} is homotopic to the Whitehead product $[\iota_n,\iota_n]$ where $\iota_n$ represents a generator of $\pi_n(S^n)$.
	\item $\Sigma \alpha=0\in \pi_{2n}(S^{n+1})$ where $\Sigma$ is the suspension operation.
\end{enumerate} 
\end{lemma}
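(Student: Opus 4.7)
The plan is to identify $\Omega S^{n+1}$ with James' reduced product $J(S^n)$ and read off the bottom two cells from the James filtration. Since the James map $J(S^n) \to \Omega S^{n+1}$ is a weak homotopy equivalence, and the James filtration $J_1(S^n) \subset J_2(S^n) \subset \cdots$ refines the minimal cell structure of $W(\Omega)$, the $2n$-skeleton of $\Omega S^{n+1}$ can be modeled by $J_2(S^n)$. This is a two-cell complex of the form $S^n \cup_\alpha e^{2n}$, so the attaching map in \eqref{cellOmega} is realized by the attaching map of the top cell of $J_2(S^n)$.

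Next I would compute this attaching map explicitly. The standard description of $J_2(S^n)$ is the pushout of the fold map $\nabla\colon S^n \vee S^n \to S^n$ along the inclusion $S^n \vee S^n \hookrightarrow S^n \times S^n$. Since $S^n \times S^n$ has the CW structure $(S^n \vee S^n) \cup_{[\iota^1,\iota^2]} e^{2n}$ with $[\iota^1, \iota^2] \in \pi_{2n-1}(S^n \vee S^n)$ the universal Whitehead product of the two wedge inclusions, passing to the pushout gives
$$J_2(S^n) \simeq S^n \cup_{\nabla \circ [\iota^1, \iota^2]} e^{2n}.$$
By naturality of Whitehead products, $\nabla_\ast[\iota^1, \iota^2] = [\nabla_\ast \iota^1, \nabla_\ast \iota^2] = [\iota_n, \iota_n]$, which proves (1).

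For (2), I would invoke the classical fact that suspension kills Whitehead products: since $S^{n+1}$ is a (co-)suspension, the Whitehead product of any two classes into it is null, so $\Sigma[\iota_n,\iota_n] = 0 \in \pi_{2n}(S^{n+1})$. A slightly more structural alternative is the James stable splitting $\Sigma \Omega S^{n+1} \simeq \bigvee_{k \ge 1} S^{kn+1}$, whose $(2n+1)$-skeleton is $S^{n+1} \vee S^{2n+1}$; the attaching map of the $(2n+1)$-cell, which is exactly $\Sigma \alpha$, is therefore nullhomotopic.

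The substantive part is entirely in Step 2: verifying the pushout identification of $J_2(S^n)$ and tracing the universal Whitehead product through the fold. Both are standard, but one must be careful about identifying the top cell of $S^n \times S^n$ with the appropriate Whitehead attaching map and about signs when invoking naturality; everything else in the argument is formal or well-known.
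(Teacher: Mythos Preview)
Your argument is correct and takes a genuinely different route from the paper. The paper argues indirectly: it uses the short exact sequence of Lemma~\ref{pi2n-1Snexac} to identify the kernel of $\Sigma:\pi_{2n-1}(S^n)\to\pi_{n-1}^s$ as $\mb{Z}/2$, shows that $\alpha$ is nonzero (from $\pi_{2n-1}(\Omega S^{n+1})\cong\pi_{2n-1}(S^n)/\langle\alpha\rangle$) and suspends to zero (via the James splitting $\Sigma\Omega S^{n+1}\simeq\bigvee S^{nk+1}$), and then separately checks that $[\iota_n,\iota_n]$ is also a nonzero element of this $\mb{Z}/2$ kernel, forcing $\alpha=[\iota_n,\iota_n]$. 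Your approach instead computes $\alpha$ directly by modeling the $2n$-skeleton of $\Omega S^{n+1}$ as $J_2(S^n)$ and pushing the Whitehead attaching map of $S^n\times S^n$ through the fold. This is cleaner and more explicit; in particular it never invokes the hypothesis $n\ne 1,3,7$ or the nonexistence of Hopf-invariant-one elements, so it actually proves the stronger statement that $\alpha=[\iota_n,\iota_n]$ for all $n\ge 1$ (the excluded cases simply have $[\iota_n,\iota_n]=0$). The paper's approach, on the other hand, stays entirely within the EHP/stable framework already established in Lemma~\ref{pi2n-1Snexac} and avoids appealing to the James construction beyond the stable splitting. One small point worth making explicit in your write-up: the attaching map in a minimal cell structure is only well-defined up to the action of self-equivalences of $S^n$, i.e.\ up to sign, but since $2[\iota_n,\iota_n]=0$ for $n$ odd this ambiguity is harmless.
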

\begin{proof}
Let $n\ne 1,3,7$ be odd.
 The weak homotopy type \eqref{cellOmega} implies 
$$\pi_{2n-1}(\Omega S^{n+1})\cong \pi_{2n-1}(S^n)/\langle \alpha \rangle$$
Note that $\pi_{2n-1}(\Omega S^{n+1})\cong \pi_{2n}(S^{n+1})\cong \pi_{n-1}^s$. By Lemma \ref{pi2n-1Snexac}, $\alpha$ is non-zero in $\pi_{2n-1}(S^n)$.

 By \cite[pp.335]{WhiteGW1978}, $\Sigma \Omega S^{n+1}$ has the weak homotopy type $\bigvee_{k=1}^\infty S^{nk+1}$. From the cell structure \eqref{cellOmega}, we have  $$\bigvee_{k=1}^\infty S^{nk+1}\simeq S^{n+1}\cup_{\Sigma \alpha} e^{2n+1}\cup \cdots$$
 This implies $\Sigma \alpha=0\in \pi_{2n}(S^{n+1})$. By the short exact sequence in Lemma \ref{pi2n-1Snexac}, $\alpha$ represents the generator of the subgroup $\mb{Z}_2\subset \pi_{2n-1}(S^n)$.

By \cite[Theorem 1.1.1]{Adams1960}, the Whitehead product $[\iota_n,\iota_n]$ is non-zero in $\pi_{2n-1}(S^n)$. Theorem 3.11 in \cite{WhiGW1946} induces $$\Sigma([\iota_n,\iota_n]) =0\in \pi_{2n}(S^{n+1})$$ 
 Therefore, by Lemma \ref{pi2n-1Snexac}, $\alpha=[\iota_n,\iota_n]\in \pi_{2n-1}(S^n)$.  
\end{proof}

Now we focus on the CW complex as follows:
\begin{equation}
	X=(S_1^n\vee S_2^n)\cup_{\beta} e^{2n}\quad \text{for odd $n\ne 1,3,7$} \label{Xconstruction}
\end{equation}
The attaching map $\beta$  represents the following homotopy class 
\begin{equation}
	(\beta_1,\beta_2,[\iota^1_n,\iota^2_n])\in \pi_{2n-1}(S^n_1\vee S^n_2) \label{compnentofbeta}
\end{equation} 
where $\beta_1=[\iota^1_n,\iota^1_n]$ or $0$, $\beta_2=[\iota^2_n,\iota^2_n]$ or $0$.

By \cite[Proposition 6.2]{Shen}, we have the following result:
\begin{proposition}\label{poincareX}
Let $X$ be a CW complex as  \eqref{Xconstruction}.
	For any generator $\ell_i\in H^n(S^n_i;\mb{Z})\subset H^n(X)$, the cup product $\ell_1\cup \ell_2$ is a generator of $H^{2n}(X;\mb{Z})=\mb{Z}$. In other words, $X$ is a Poincar\'e complex.
\end{proposition}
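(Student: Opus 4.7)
The plan is to first pin down $H^*(X;\mb{Z})$ additively via the cellular chain complex, then compute the cup product $\ell_1\cup\ell_2$ by reducing to a Hopf-invariant-type calculation on the attaching map, and finally deduce Poincar\'e duality from the resulting unimodular intersection form.

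The cellular chain complex of $X$ has $C_0=\mb{Z}$, $C_n=\mb{Z}\langle e^n_1, e^n_2\rangle$, $C_{2n}=\mb{Z}\langle e^{2n}\rangle$ and vanishes in all other dimensions, so all differentials are trivially zero. Hence $H^0(X)\cong H^{2n}(X)\cong\mb{Z}$ and $H^n(X)\cong\mb{Z}\langle\ell_1,\ell_2\rangle$; let $\mu$ denote the generator of $H^{2n}(X)$ dual to the $2n$-cell.

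The heart of the argument is to show $\ell_1\cup\ell_2=\pm\mu$. I would pick classifying maps $u,v:X\to K(\mb{Z},n)$ for $\ell_1,\ell_2$ arranged so that $u$ is constant on $S^n_2$ and $v$ is constant on $S^n_1$. Then $(u\cup v)|_{S^n_1\vee S^n_2}$ is null-homotopic, so $u\cup v$ factors up to homotopy through the quotient $q:X\to X/(S^n_1\vee S^n_2)=S^{2n}$, producing a map $S^{2n}\to K(\mb{Z},2n)$ whose degree is $\langle\ell_1\cup\ell_2,[e^{2n}]\rangle$. Equivalently, this degree is read off from the composition $S^{2n-1}\xrightarrow{\beta}S^n_1\vee S^n_2\xrightarrow{(u,v)}K(\mb{Z},n)\times K(\mb{Z},n)\xrightarrow{\cup}K(\mb{Z},2n)$ via the obstruction of a null-homotopy. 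This integer is a linear functional of $\beta$ in Hilton's decomposition $\pi_{2n-1}(S^n_1\vee S^n_2)\cong\pi_{2n-1}(S^n_1)\oplus\pi_{2n-1}(S^n_2)\oplus\mb{Z}\langle[\iota^1_n,\iota^2_n]\rangle\oplus\cdots$: on the first two summands it vanishes because $u\cup v$ has already been trivialized on each sphere separately, while on the $[\iota^1_n,\iota^2_n]$ summand it equals $\pm 1$ by comparison with the universal model $S^n\times S^n=(S^n\vee S^n)\cup_{[\iota^1_n,\iota^2_n]}e^{2n}$, where the cup product of the two axis classes is known to be the fundamental class. Since the $[\iota^1_n,\iota^2_n]$-coefficient of $\beta$ is $1$, we obtain $\ell_1\cup\ell_2=\pm\mu$.

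For odd $n$, graded commutativity forces $2(\ell_i\cup\ell_i)=0$, hence $\ell_i\cup\ell_i=0$ in the torsion-free group $H^{2n}(X)$, so the cup-product form on $H^n(X)$ is the unimodular skew-symmetric matrix $\left(\begin{smallmatrix}0&\pm1\\\mp1&0\end{smallmatrix}\right)$. A generator of $H_{2n}(X;\mb{Z})$ then serves as a fundamental class $[X]$, and by unimodularity (combined with the universal-coefficient identification $H_n(X)\cong\mm{Hom}(H^n(X),\mb{Z})$ in the free case) the cap product $\cap[X]$ induces isomorphisms $H^k(X)\to H_{2n-k}(X)$ in every degree, so $X$ is a Poincar\'e complex. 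The main obstacle is the middle step: rigorously justifying that the degree above depends only on the $[\iota^1_n,\iota^2_n]$-coefficient of $\beta$, which is handled most cleanly via the universal-example reduction sketched above, or equivalently by a direct cellular diagonal approximation on the $2n$-cell.
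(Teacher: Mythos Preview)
The paper does not prove this proposition; it simply cites \cite[Proposition~6.2]{Shen}. There is therefore no argument in the present paper to compare yours against.

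Your outline is the standard route and is sound. The additive computation of $H^*(X)$, the vanishing of $\ell_i\cup\ell_i$ for odd $n$ by graded commutativity in the torsion-free group $H^{2n}(X)$, and the deduction of Poincar\'e duality from the resulting unimodular middle pairing are all unproblematic. The only substantive step is the one you already isolate: identifying $\langle\ell_1\cup\ell_2,[e^{2n}]\rangle$ with the $[\iota^1_n,\iota^2_n]$-coefficient of $\beta$. Your two suggested justifications---comparison with the universal example $S^n\times S^n$, or a direct cellular diagonal approximation---both work. One small caution: the phrase ``linear functional of $\beta$'' is a little loose, since distinct attaching maps produce distinct complexes $X_\beta$, so you are not literally invoking linearity of a single map. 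What you need is that the assignment $\beta\mapsto\langle\ell_1\cup\ell_2,[e^{2n}]\rangle$ is a homomorphism $\pi_{2n-1}(S^n_1\vee S^n_2)\to\mb{Z}$ vanishing on the $\pi_{2n-1}(S^n_i)$ summands; this is exactly what the cellular-diagonal or functional-cup-product argument establishes. With that made precise, the proof is complete.
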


Next we define the topological Kervaire invariant for Poincar\'{e} complexes of the form \eqref{Xconstruction}.
\begin{lemma}\label{fmap}
	Let $X$ be a CW complex as  \eqref{Xconstruction}. For any generator $\ell_i\in H^n(S^n_i;\mb{Z})\subset H^n(X;\mb{Z})$, there exists a map $f:X\to \Omega$ such that its induced cohomology homomorphism satisfies $f^\ast(e_n)=\ell_i$. 
\end{lemma}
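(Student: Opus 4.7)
The plan is to build $f$ cell-by-cell, exploiting the fact that both $X$ and $\Omega$ have a single cell in each of the dimensions $n$ and $2n$ (the higher cells of $\Omega$ will not interfere). Fix $i\in\{1,2\}$ and let $j$ denote the other index. Using the weak cell structure $W(\Omega)=S^n\cup_\alpha e^{2n}\cup\cdots$ from \eqref{cellOmega}, I would first define a map $f_0\colon S^n_1\vee S^n_2\to\Omega$ on the $n$-skeleton of $X$ by sending $S^n_i$ to $\Omega$ via a generator of $\pi_n(\Omega)\cong\mathbb{Z}$ (i.e.\ along the bottom cell) and collapsing $S^n_j$ to the basepoint. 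By construction $f_0^*(e_n)=\ell_i$, since the restriction to $S^n_i$ pulls $e_n$ back to a generator of $H^n(S^n_i;\mathbb{Z})$.

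The main content is to verify that $f_0$ extends over the top cell $e^{2n}$ of $X$, which amounts to showing that $(f_0)_*(\beta)=0$ in $\pi_{2n-1}(\Omega)$. Decomposing $\beta=(\beta_1,\beta_2,[\iota^1_n,\iota^2_n])$ via Hilton's splitting \eqref{homotopywedgeSn} and applying $(f_0)_*$ termwise, the components involving $S^n_j$ (namely $\beta_j$ and the mixed Whitehead product $[\iota^1_n,\iota^2_n]$) vanish by naturality of Whitehead products, since $f_0|_{S^n_j}$ is constant. The only nontrivial summand is $\beta_i$: if $\beta_i=0$ there is nothing to check; if $\beta_i=[\iota^i_n,\iota^i_n]$, then $(f_0)_*(\beta_i)=[\iota_n,\iota_n]$, and Lemma \ref{alpha}(1) identifies this class with the attaching map $\alpha$ of the $2n$-cell of $\Omega$, which is therefore null-homotopic in $\Omega$. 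Hence $f_0\circ\beta\simeq 0$ and the extension $f\colon X\to\Omega$ exists.

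Finally, since extending over a $2n$-cell does not change behaviour on $H^n$, the resulting $f$ still satisfies $f^*(e_n)=\ell_i$, completing the construction. The only substantive input is Lemma \ref{alpha}(1); without the identification $\alpha=[\iota_n,\iota_n]$ the extension step would be obstructed, but with it the potentially problematic summand $\beta_i=[\iota^i_n,\iota^i_n]$ is precisely the class that the $2n$-cell of $\Omega S^{n+1}$ is built to kill, so the argument is a straightforward cellular extension once that lemma is in hand.
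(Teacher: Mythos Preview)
Your argument is correct and follows essentially the same route as the paper: define the map on the $n$-skeleton by collapsing $S^n_j$ and sending $S^n_i$ to the bottom cell, then use Lemma~\ref{alpha}(1) together with naturality of Whitehead products to see that $(f_0)_*(\beta)=0$, so the extension over the top cell exists. The only cosmetic difference is that the paper first builds the map into the CW model $W(\Omega)$ and then invokes the bijection $[X,\Omega]\to[X,W(\Omega)]$ coming from the weak equivalence, whereas you work directly in $\Omega$; the content is identical.
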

\begin{proof}
By collapsing the $S^n_{3-i}$ to a point, we obtain a map $$\mathscr{C}_i:S^n_1\vee S^n_2\to S^n_i$$ 
where $i=1,2$. Thus there exists a natural map 
$$f(i):S^n_1\vee S^n_2\stackrel{\mathscr{C}_i}{\longrightarrow }S^n_i\to W(\Omega)$$
whose induced cohomology homomorphism satisfies $f(i)^\ast(e_n)=\ell_i$ for $i=1,2$. Moreover, the homotopy homomorphism satisfies that $\iota_n=f(i)_\ast(\iota^i_n)$ is a generator of $\pi_n(W(\Omega))\cong \pi_n(S^n)$. 

 Recall Lemma \ref{alpha} and the components of $\beta$ (see \eqref{compnentofbeta}). Under the homomorphism
$$f(i)_\ast:\pi_{2n-1}(S^n_1\vee S^n_2) \to \pi_{2n-1}(W(\Omega))$$
we have $f(i)_\ast(\beta)=0$. Therefore, the map $f(i)$ can be extended to the CW complex $X$. For the weak homotopy equivalence $\Omega\to W(\Omega)$, the induced morphism $[X,\Omega]\to [X,W(\Omega)]$ is a bijection \cite[pp.182]{WhiteGW1978}. This completes the proof.
\end{proof}

We define a function $\varphi_0: H^n(X;\mb{Z}) \to  \mb{Z}/2$ as follows:

 Given a generator
 $\ell_i\in H^n(X;\mb{Z})$ for $i=1,2$, let $f: X\to \Omega $ be a map such that $f^\ast(e_n)=\ell_i$ (see Lemma \ref{fmap}). Then, we define 
 $$\varphi_0(\ell_i)
:= f^\ast(u_{2n}) [X]_2\in \mb{Z}/2$$ where $u_{2n}\in  H^{2n}(\Omega ; \mb{Z}/2)$ is the reduction modulo $2$ of $e_{2n} \in  H^{2n}(\Omega;\mb{Z})$, and $f^\ast(u_{2n}) [X]_2$ is the value of the cohomology class $f^\ast(u_{2n})$ on the generator $[X]_2$ of $H_{2n}(X;\mb{Z}_2)=\mb{Z}_2$.

\begin{lemma}
	The function $\varphi_0: H^n(X;\mb{Z}) \to  \mb{Z}/2$ is well-defined, i.e., $\varphi_0(\ell_i)$ is independent of the choice of the map $f:X\to \Omega$ satisfying $f^\ast(e_n)=\ell_i$.
\end{lemma}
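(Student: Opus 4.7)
The plan is to reduce the independence of $\varphi_0$ to the classical fact that, for $n$ odd with $n\neq 1,3,7$, every map $S^{2n+1}\to S^{n+1}$ has even Hopf invariant.

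Suppose $f,g:X\to \Omega$ both satisfy $f^{\ast}(e_n)=g^{\ast}(e_n)=\ell_i$. Since $\Omega=\Omega S^{n+1}$ is $(n-1)$-connected with $\pi_n(\Omega)\cong \mb{Z}$ detected by $e_n$, a map $S_1^n\vee S_2^n\to \Omega$ is determined up to homotopy by the pullback of $e_n$, so $f|_{S_1^n\vee S_2^n}\simeq g|_{S_1^n\vee S_2^n}$. Using the homotopy extension property for the CW pair $(X,S_1^n\vee S_2^n)$, I would replace $f$ by a homotopic map so that $f$ and $g$ literally coincide on the $n$-skeleton; this does not alter $f^{\ast}(u_{2n})$. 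With this arranged, the two extensions over the single top $2n$-cell differ by the standard primary-difference construction: gluing the two disks $D^{2n}$ along their common boundary produces a class $\delta\in \pi_{2n}(\Omega)\cong \pi_{2n+1}(S^{n+1})$, and obstruction theory yields
\[
\bigl(f^{\ast}(u_{2n})-g^{\ast}(u_{2n})\bigr)[X]_2 \;=\; \delta^{\ast}(u_{2n})[S^{2n}]_2 \in \mb{Z}/2.
\]

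It therefore suffices to check that the integer $\delta^{\ast}(e_{2n})[S^{2n}]$ is even. I would identify this integer with the classical Hopf invariant $H(\tilde\delta)$ of the adjoint $\tilde\delta\in \pi_{2n+1}(S^{n+1})$: under the James splitting $\Sigma \Omega S^{n+1}\simeq \bigvee_{k\ge 1}S^{kn+1}$ (already used in the proof of Lemma \ref{alpha}), the suspension of $e_{2n}$ corresponds to the generator of the $k=2$ summand $S^{2n+1}$, and the James-Hopf invariant on this factor agrees with the classical Hopf invariant for maps $S^{2n+1}\to S^{n+1}$. Granted this identification, Adams's theorem (the same input behind Lemma \ref{pi2n-1Snexac}, and the precise reason for excluding $n=1,3,7$) forces $H(\tilde\delta)\in 2\mb{Z}$, so $\delta^{\ast}(u_{2n})=0$ and $\varphi_0(\ell_i)$ is well defined.

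The main obstacle is the identification $\delta^{\ast}(e_{2n})[S^{2n}]=H(\tilde\delta)$; it is classical but requires some care with signs and with the James-Hopf construction, and it is the precise point at which the hypothesis $n\neq 1,3,7$ enters (via Adams). The preceding connectivity argument on the $n$-skeleton and the obstruction-theoretic formula for the primary difference are routine once this identification is in hand.
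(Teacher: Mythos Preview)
Your proposal is correct and follows essentially the same route as the paper: both arguments homotope $f$ and $g$ to agree on the $n$-skeleton, extract the primary difference $\delta\in\pi_{2n}(\Omega)\cong\pi_{2n+1}(S^{n+1})$, identify the Hurewicz image of $\delta$ in $H_{2n}(\Omega;\mb{Z})$ with the Hopf invariant of the adjoint $\tilde\delta$, and then invoke Adams's theorem to conclude this is even. The only cosmetic difference is that the paper packages the primary difference via the differential cochain $w^{2n}(f,g)$ on the pair $(X\times I,\,X\times\partial I\cup X^{(2n-1)}\times I)$ and cites Serre for the Hopf-invariant identification, whereas you phrase it directly via the homotopy extension property and the James splitting; the content is the same.
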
 
\begin{proof}
	Let $f,g:X\to \Omega$ be two maps such that $f^\ast(e_n)=g^\ast(e_n)\in H^n(X;\mb{Z})$. We need to show  $f^\ast(u_{2n})=g^\ast(u_{2n})\in H^{2n}(X;\mb{Z}/2)$. 
	
	By the cell structure of $X$ (see \eqref{Xconstruction}) and the condition $f^\ast(e_n)=g^\ast(e_n)$, there exists a homotopy $$F:X^{(2n-1)}\times I\to \Omega$$ such that $F|_{X^{(2n-1)}\times 0}=f|_{X^{(2n-1)}}$ and $F|_{X^{(2n-1)}\times 1}=g|_{X^{(2n-1)}}$ where $X^{(2n-1)}$ is the $(2n-1)$-skeleton of $X$.  Then we define a map 
	$$\hat{F}:X\times \mathring{I}\cup X^{(2n-1)}\times I\stackrel{f\cup g\cup F}{\longrightarrow}\Omega$$
	Let $Y=X\times I$, $A=X\times \mathring{I}\cup X^{(2n-1)}\times I$. The differential cochain $w^{2n}(f,g)$ of the pair $(f,g)$ with respect to $F$ is defined via the sequence:
	$$H_{2n+1}(Y,A;\mb{Z})\stackrel{h}{\leftarrow}\pi_{2n+1}(Y,A)\stackrel{\partial}{\to }\pi_{2n}(A)\stackrel{\hat F_\ast}{\to }\pi_{2n}(\Omega)$$
	where $h$ is the Hurewicz homomorphism. 
	
It is straightforward to verify that the following sequence
$$0\to H_{2n+1}(Y,A;\mb{Z})\cong \mb{Z}\to H_{2n}(A;\mb{Z})\cong \mb{Z}^2\to H_{2n}(Y;\mb{Z})\cong\mb{Z}\to 0$$
is split exact. Moreover, a generator $s_{2n+1}\in H_{2n+1}(Y,A;\mb{Z})$ is mapped to $(x_1,-x_2)\in H_{2n}(A;\mb{Z})$ where $x_1$ and $x_2$ are linearly independent generators of $H_{2n}(A;\mb{Z})$; both $x_1$ and $x_2$ map to the generator $x\in H_{2n}(Y;\mb{Z})\cong H_{2n}(X;\mb{Z})$.

Considering the commutative diagram
\[
\xymatrix@C=0.8cm{
\pi_{2n+1}(Y,A)\ar[d]^-{h}_-{\cong}\ar[r]^-{\partial}&\pi_{2n}(A) \ar[d]^-{h}\ar[r]^-{\hat F_\ast}&\pi_{2n}(\Omega)\ar[d]^-{h}\\
H_{2n+1}(Y,A;\mb{Z})\ar[r]^-{\partial_\#}& H_{2n}(A)\ar[r]^-{\hat F_\#}   & H_{2n}(\Omega;\mb{Z})=\mb{Z}
}
\]
we have the equation:
$$\hat F_\#\circ \partial_\#(s_{2n+1})=h[w^{2n}(f,g)(s_{2n+1})]$$
Since $\hat F_\#\circ \partial_\#(s_{2n+1})=f_\#(x)-g_\#(x)\in H_{2n}(\Omega;\mb{Z})$, we have
\begin{equation}
	f_\#(x)-g_\#(x)=h[w^{2n}(f,g)(s_{2n+1})] \label{differcoch}
\end{equation}
Let $\rho_2$ be the mod $2$ reduction.
By Serre's work, $\rho_2(h[w^{2n}(f,g)(s_{2n+1})])$ equals 
 the mod $2$ Hopf invariant of the element in $\pi_{2n+1}(S^{n+1})$ represented by $[w^{2n}(f,g)(s_{2n+1})]\in \pi_{2n}(\Omega)$. Since no element of odd Hopf invariant occurs in $\pi_{2n+1}(S^{n+1})$ (see \cite{Adams1960, Hatcher}), we have
 $$\rho_2(h[w^{2n}(f,g)(s_{2n+1})])=0\in H_{2n}(\Omega;\mb{Z}_2)$$
 Then Equation \eqref{differcoch} induces 
  $$\rho_2(f_\#(x)-g_\#(x))=0\in H_{2n}(\Omega;\mb{Z}_2)$$
Given $H_{2n}(X;\mb{Z}_2)\cong H_{2n}(\Omega;\mb{Z}_2)\cong \mb{Z}_2$, it follows that:
 $$f_\#=g_\#:H_{2n}(X;\mb{Z}_2)\to  H_{2n}(\Omega;\mb{Z}_2)$$
By taking the cohomology homomorphism, we finish the proof. 
\end{proof}

Following \cite[Proof of Lemma 1.3]{Kerva1960}, we have 
\begin{lemma}\label{varphiplus}
	For any $\ell_1,\ell_2\in H^n(X;\mb{Z})$, $$\varphi_0(\ell_1+\ell_2)=\varphi_0(\ell_1)+\varphi_0(\ell_2)+l_1\cdot l_2$$
	where $l_i$ is the mod $2$ reduction of $\ell_i$ for $i=1,2$; $l_1\cdot l_2$ is the value on the generator of $H_{2n}(X;\mb{Z}_2)$ of the cup product $l_1\cup l_2$.
\end{lemma}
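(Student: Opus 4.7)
The plan is to use the H-space structure on $\Omega=\Omega S^{n+1}$ coming from loop concatenation, together with the resulting Hopf-algebra structure on cohomology. Write $\mu\colon \Omega\times\Omega\to\Omega$ for the multiplication and $\Delta\colon X\to X\times X$ for the diagonal. Given $\ell_1,\ell_2\in H^n(X;\mb{Z})$, pick maps $f_1,f_2\colon X\to \Omega$ with $f_i^{\ast}(e_n)=\ell_i$ (using Lemma \ref{fmap}, or its obvious extension to arbitrary integral classes via the group structure on $[X,\Omega]$), and assemble them into
$$f=\mu\circ (f_1\times f_2)\circ \Delta\colon X\longrightarrow \Omega.$$
The lemma will then fall out of naturality, once $\mu^{\ast}$ is known on $e_n$ and $e_{2n}$.

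First I would compute the coproduct in the relevant range. The cell decomposition \eqref{cellOmega}, together with the James model $J(S^n)\simeq \Omega S^{n+1}$, identifies $H_{\ast}(\Omega;\mb{Z})$ as a free graded $\mb{Z}$-module on classes $y_{kn}$ (one in each degree $kn$), with $y_n$ primitive and with Pontryagin product $y_n\cdot y_n=y_{2n}$ a generator of $H_{2n}(\Omega;\mb{Z})\cong \mb{Z}$. Dualising gives
$$\mu^{\ast}(e_n)=e_n\otimes 1+1\otimes e_n,\qquad \mu^{\ast}(e_{2n})=e_{2n}\otimes 1+e_n\otimes e_n+1\otimes e_{2n},$$
the coefficient of $e_n\otimes e_n$ being pinned down by the pairing $\langle \mu^{\ast}(e_{2n}),y_n\otimes y_n\rangle=\langle e_{2n},y_{2n}\rangle=1$. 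Reducing mod $2$ yields $\mu^{\ast}(u_{2n})=u_{2n}\otimes 1+u_n\otimes u_n+1\otimes u_{2n}$.

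Next I would apply naturality to the composite $f$. The formula for $\mu^{\ast}(e_n)$ gives $f^{\ast}(e_n)=\ell_1+\ell_2$, so $f$ is a legitimate representative for computing $\varphi_0(\ell_1+\ell_2)$. Similarly,
$$f^{\ast}(u_{2n})=f_1^{\ast}(u_{2n})+f_1^{\ast}(u_n)\cup f_2^{\ast}(u_n)+f_2^{\ast}(u_{2n})=f_1^{\ast}(u_{2n})+l_1\cup l_2+f_2^{\ast}(u_{2n}).$$
Evaluating on the mod $2$ fundamental class $[X]_2$ and invoking the well-definedness established in the previous lemma gives the desired identity $\varphi_0(\ell_1+\ell_2)=\varphi_0(\ell_1)+\varphi_0(\ell_2)+l_1\cdot l_2$.

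The main obstacle is the Hopf-algebra input: verifying that the coefficient of $e_n\otimes e_n$ in $\mu^{\ast}(e_{2n})$ is exactly $1$. Since $n$ is odd, the Pontryagin algebra $H_{\ast}(\Omega S^{n+1};\mb{Z})$ is a priori only associative (not graded-commutative), so one has to rule out the values $0$ and $2$; this is handled cleanly by the James model, but care is needed because the naive graded-commutativity argument is unavailable. Apart from this point, the remaining manipulations are formal, and mirror those in \cite[Proof of Lemma 1.3]{Kerva1960}.
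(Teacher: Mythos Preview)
Your proposal is correct and follows essentially the same route as the paper, which simply defers to \cite[Proof of Lemma 1.3]{Kerva1960}. Kervaire's argument there is precisely the one you outline: use the loop multiplication $\mu$ on $\Omega=\Omega S^{n+1}$, represent $\ell_1+\ell_2$ by $\mu\circ(f_1\times f_2)\circ\Delta$, and read off the formula from $\mu^{\ast}(u_{2n})=u_{2n}\otimes 1+u_n\otimes u_n+1\otimes u_{2n}$.

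One minor remark on the ``main obstacle'' you flag: there is no need to compute the integral coefficient of $e_n\otimes e_n$ exactly. Since the statement lives in $\mb{Z}/2$, it suffices to work directly with $\mb{Z}/2$ coefficients. Kervaire's original argument does this: the $2n$-skeleton of $\Omega$ is $S^n\cup_{[\iota_n,\iota_n]}e^{2n}$ (Lemma \ref{alpha}), and since $[\iota_n,\iota_n]$ has Hopf invariant $\pm 2$, one gets $u_n^2=0$ in $H^{\ast}(\Omega;\mb{Z}/2)$; the coproduct formula for $u_{2n}$ then follows from the Hopf-algebra structure. Your James-model computation of the Pontryagin square $y_n\cdot y_n=y_{2n}$ is an equally valid way to pin this down.
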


By Lemma \ref{varphiplus}, $\varphi_0(2\ell)=0$ for any $\ell\in H^n(X;\mb{Z})$. Hence we can define a new function $\varphi: H^n(X;\mb{Z}/2) \to  \mb{Z}/2$ by $\varphi(l)=\varphi_0(\ell)$ where $l$ is the mod $2$ reduction of $\ell$. 

The function $\varphi: H^n(X;\mb{Z}/2) \to  \mb{Z}/2$ is then used to construct the number $\Phi^T(X)$ as follows. Take a basis $\{l_1,l_2\}$ of $H^n(X;\mb{Z}/2)$. Let $$\Phi^T(X):=\varphi(l_1)\cdot \varphi(l_2)$$
By Lemma \ref{varphiplus}, $\Phi^T(X)$ is independent of the choice of basis. We say $\Phi^T(X)$ the topological Kervaire invariant of $X$. 

\begin{proposition}\label{KervaofX}
	Let $X$ be the CW complex as    \eqref{Xconstruction} with the attaching map $\beta=(\beta_1,\beta_2,[\iota_n^1,\iota_n^2])$. 
	\begin{enumerate}
		\item If $\beta_1=[\iota_n^1,\iota_n^1]$ and $\beta_2=[\iota_n^2,\iota_n^2]$, $\Phi^T(X)=1$.
		\item  If $\beta_1=0$ and $\beta_2=[\iota_n^2,\iota_n^2]$, $\Phi^T(X)=0$. 
		\item If $\beta_1=[\iota_n^1,\iota_n^1]$ and $\beta_2=0$, $\Phi^T(X)=0$.
		\item If $\beta_1=0$ and $\beta_2=0$, $\Phi^T(X)=0$.
	\end{enumerate}
	
\end{proposition}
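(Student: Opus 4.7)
The plan is to compute $\varphi(l_i) \in \mathbb{Z}/2$ separately for $i = 1, 2$ in each of the four cases, where $l_i \in H^n(X;\mathbb{Z}/2)$ is the mod $2$ reduction of the generator $\ell_i \in H^n(S^n_i;\mathbb{Z}) \subset H^n(X;\mathbb{Z})$. Since $\Phi^T(X) = \varphi(l_1) \cdot \varphi(l_2)$ by construction, and by the well-definedness lemma the quantity $f^\ast(u_{2n})[X]_2$ is independent of the chosen extension $f: X \to \Omega$ with $f^\ast(e_n) = \ell_i$, I will pick the most convenient extension of the standard map $(S^n_i \hookrightarrow W(\Omega)) \circ \mathscr{C}_i$ in each case.

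First I would dispose of the case $\beta_i = 0$. Since the collapse map $\mathscr{C}_i$ kills $\beta_{3-i}$ and also kills the Whitehead product $[\iota^1_n, \iota^2_n]$ (as $\mathscr{C}_i \iota_n^{3-i} = 0$), we have $\mathscr{C}_i \circ \beta = \beta_i = 0$ in $\pi_{2n-1}(S^n)$. Therefore $\mathscr{C}_i$ itself extends to a map $\tilde{\mathscr{C}}_i: X \to S^n$, and we may take $f(i)$ to factor as $X \xrightarrow{\tilde{\mathscr{C}}_i} S^n \hookrightarrow W(\Omega)$. The induced map $f(i)^\ast(u_{2n})$ then factors through $H^{2n}(S^n;\mathbb{Z}/2) = 0$, so $\varphi(l_i) = 0$.

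Next I would handle the case $\beta_i = [\iota_n^i, \iota_n^i]$. Now $\mathscr{C}_i \circ \beta = [\iota_n, \iota_n] = \alpha$ in $\pi_{2n-1}(S^n)$ by Lemma \ref{alpha}, and this is precisely the attaching map of the $2n$-cell of $W(\Omega)$. The canonical null-homotopy supplied by the characteristic map of $e^{2n} \subset W(\Omega)$ produces an extension $f(i): X \to W(\Omega)^{(2n)} \subset W(\Omega)$ whose induced map on cofibers $X/X^{(2n-1)} = S^{2n} \to W(\Omega)^{(2n)}/S^n = S^{2n}$ is the identity. Therefore $f(i)^\ast(u_{2n})$ evaluates to $1$ on $[X]_2$, giving $\varphi(l_i) = 1$.

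Assembling the two computations in each of the four configurations of $(\beta_1,\beta_2)$ yields the four claimed values of $\Phi^T(X)$. The main subtlety, and where I expect to need the most care, is in the second case: a priori different null-homotopies yield extensions that may disagree on the top cell, so one must invoke the well-definedness result to see that the ambiguity disappears mod $2$. This is ultimately a shadow of the Adams theorem via Hopf invariants that was already used to establish well-definedness of $\varphi_0$, so no new input is required beyond careful bookkeeping of the cellular extension.
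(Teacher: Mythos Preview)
Your proposal is correct and follows essentially the same route as the paper. The paper phrases the argument via the collapse $\mathscr{C}: X \to S^n_1 \cup_{\beta_1} e^{2n}$ (obtained by crushing $S^n_2$), then maps this two-cell complex into $\Omega$ either through $S^n_1$ (when $\beta_1=0$) or via the identification $S^n_1\cup_{[\iota_n,\iota_n]}e^{2n}\simeq W(\Omega)^{(2n)}$ from Lemma~\ref{alpha} (when $\beta_1=[\iota^1_n,\iota^1_n]$); your description in terms of extending $\mathscr{C}_i$ over the top cell is the same construction, just without naming the intermediate space.
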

\begin{proof}
	Recall that $X=(S_1^n\vee S_2^n)\cup_{\beta} e^{2n}$ with the attaching map
	$$\beta=(\beta_1,\beta_2,[\iota^1_n,\iota^2_n])\in \pi_{2n-1}(S^n_1\vee S^n_2)$$ 
	  We define a map
	  $$\mathscr C:X{\longrightarrow}S^n_1\cup_{\beta_1}e^{2n}$$
	  by collapsing $S^n_2$ to a point, which  induces an isomorphism on $H^{2n}$. 
	  
When $\beta_1=[\iota_n^1,\iota_n^1]$,
Lemma \ref{alpha} guarantees the existence of a map
$$\mm{i}: S^n_1\cup_{\beta_1}e^{2n}\to \Omega$$ 
that induces isomorphisms on $H^n$ and $H^{2n}$. Moreover, $\ell_1=(\mm{i}\circ \mathscr C)^\ast(e_n)$ is a generator of $ H^n(X;\mb{Z})$.	  
 Since the composition $\mm{i}\circ \mathscr C$ induces an isomorphism on $H^{2n}$, we have 
$\varphi_0(\ell_1)=1$.

When $\beta_1=0$, we have $S^n_1\cup_{\beta_1}e^{2n}\simeq S^n_1\vee S^{2n}$.  Collapsing $S^{2n}$ to a point gives a map$$\mm{j}: S^n_1\cup_{\beta_1}e^{2n}\simeq S^n_1\vee S^{2n}\to \Omega$$
that induces an isomorphism on $H^n$. Thus, $\ell_1=(\mm{i}\circ \mathscr C)^\ast(e_n)\in H^n(X;\mb{Z})$ is a generator, and $(\mm{i}\circ \mathscr C)^\ast(e_{2n})=0\in H^{2n}(X;\mb{Z})$, which implies $\varphi_0(\ell_1)=0$. 

By symmetry, we obtain another generator $\ell_2\in H^n(X;\mb{Z})$ such that $\varphi_0(\ell_2)=1$ if $\beta_2=[\iota_n^2,\iota_n^2]$, and $\varphi_0(\ell_2)=0$ if $\beta_2=0$. 

Clearly, $\ell_1$ and $\ell_2$ form a basis of $H^n(X;\mb{Z})=\mb{Z}^2$. Since $\Phi^T(X)=\varphi_0(\ell_1)\cdot \varphi_0(\ell_2)$, the desired results follow.
\end{proof}

\section{Spivak normal fibration}\label{SNfi}


 In this section we 
 analyze the Spivak normal fibrations of Poincar\'{e} complexes of the form \eqref{Xconstruction}. 
 

\begin{proposition}\label{Spivak11}
	Let $X$ be a CW complex as  \eqref{Xconstruction} with attaching map
	$\beta=([\iota_n^1,\iota_n^1],[\iota_n^2,\iota_n^2],[\iota_n^1,\iota_n^2])\in \pi_{2n-1}(S^n_1\vee S^n_2)$.
	 The Spivak normal fibration of $X$ is trivial.
\end{proposition}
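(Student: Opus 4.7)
The plan is to prove triviality by showing that the trivial stable fibration already satisfies Spivak's spherical-fundamental-class condition, and then invoking the uniqueness clause of Spivak's theorem \cite{Spi1967}. No direct analysis of the classifying map $X\to \mm{B}G$ is required.

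The crucial input is that the attaching map $\beta=([\iota_n^1,\iota_n^1],[\iota_n^2,\iota_n^2],[\iota_n^1,\iota_n^2])$ vanishes after one suspension. All three Hilton components are Whitehead products: the diagonal entries $[\iota_n^i,\iota_n^i]$ are killed by $\Sigma$ thanks to Lemma \ref{alpha}, while the cross term $[\iota_n^1,\iota_n^2]$ also suspends to zero, since every Whitehead product is annihilated by the suspension homomorphism. Consequently $\Sigma X\simeq S^{n+1}_1\vee S^{n+1}_2\vee S^{2n+1}$, and in particular $X$ is stably equivalent to $S^n\vee S^n\vee S^{2n}$.

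I would then take the trivial stable spherical fibration $\varepsilon$ of rank $m\gg 2n$ over $X$, whose Thom space satisfies $T(\varepsilon)\simeq \Sigma^m X_+ \simeq S^m\vee S^{m+n}\vee S^{m+n}\vee S^{m+2n}$. Under the Thom isomorphism the generator of $H_{m+2n}(T(\varepsilon);\mb{Z})$ corresponds to $[X]$, and by the splitting it is the Hurewicz image of the inclusion of the top wedge summand $S^{m+2n}\hookrightarrow T(\varepsilon)$; hence it is spherical. Therefore $\varepsilon$ meets the defining property of a Spivak normal fibration, and by Spivak's uniqueness theorem the Spivak normal fibration of $X$ is stably fiber homotopy equivalent to $\varepsilon$, and is thus trivial. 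The only verification that deserves care is that $\Sigma[\iota_n^1,\iota_n^2]=0$ in the ambient group $\pi_{2n}(S^{n+1}_1\vee S^{n+1}_2)$, which is the classical vanishing of Whitehead products under suspension and hence presents no real obstacle.
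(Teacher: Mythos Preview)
Your proof is correct and follows essentially the same route as the paper: both observe that all three Hilton components of $\beta$ are Whitehead products and hence suspend to zero, then deduce triviality of the Spivak normal fibration. The only difference is that the paper invokes \cite[Lemma~3.10]{KaLaPoTe} for the implication ``$\beta$ stably null $\Rightarrow$ Spivak normal fibration trivial'', while you supply a direct argument via the Thom space of the trivial fibration and Spivak's uniqueness theorem.
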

\begin{proof} 
	It is evident that the Whitehead product is trivial upon application of the suspension operation. 
	So, $\beta$ is stably null homotopic. By \cite[Lemma 3.10]{KaLaPoTe}, the Spivak normal fibration of $X$ is trivial.
\end{proof}

\begin{proof}[Proof of Theorem A (1)]
 $X$ is a CW complex $$(S_1^n\vee S_2^n)\cup_{([\iota_n^1,\iota_n^1],[\iota_n^2,\iota_n^2],[\iota_n^1,\iota_n^2])} e^{2n}$$
 
	For odd $n=1,3,7$, Adams \cite{Adams1960} showed $[\iota_n,\iota_n]=0\in \pi_{2n-1}(S^n)$ where $\iota_n$ is a generator of $\pi_n(S^n)$. So $X$ is the homotopy type of $S^n\times S^n$.
	
For odd $n\ne 1,3,7$, by Proposition \ref{Spivak11} and Theorem \ref{PLTOP}, we completes the proof.

 Let $n\ge 5$ be odd, and let $M$ be a topological manifold homotopy equivalent to $X$. By \cite{KirSie1969}, $M$ admits a piecewise linear (PL) structure. By Sullivan's work \cite[Theorem 3]{Sull1966}, all PL manifolds homotopy equivalent to $X$ are PL homeomorphic, and also homeomorphic.	
\end{proof}

\section{Obstruction}\label{Obs-section}

Let $n\ne 1,3,7$ be odd.
By Proposition \ref{KervaofX} and (1) of Theorem A, the CW complex $(S_1^n\vee S_2^n)\cup_{([\iota_n^1,\iota_n^1],[\iota_n^2,\iota_n^2],[\iota_n^1,\iota_n^2])} e^{2n}$ is homotopy equivalent to a closed topological $(n-1)$-connected $2n$-manifold $\mc{M}$ with topological Kervaire invariant one. By \cite{KirSie1969}, $\mc{M}$ is a PL manifold. 
	
In this section, we address the question: is
  $\mc{M}$ a smooth manifold?


First, we establish a lemma that will be utilized in what follows.
\begin{lemma}\label{Spivaknormal-manifold}
	  Let $M$ be a closed, simply connected smooth or PL manifold, and let $\nu$ be the normal bundle of an embedding of $M$ in Euclidean space. Then the spherical fibration obtained by deleting the zero-section from $\nu$
 is stably fiber homotopy equivalent to the Spivak normal fibration of the underlying Poincar\'e complex of $M$.
\end{lemma}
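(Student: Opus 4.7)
The plan is to realize the sphere bundle of $\nu$ as a Spivak normal fibration via the Pontryagin--Thom construction, and then invoke uniqueness. Throughout, let $M$ be a closed simply connected $n$-manifold (smooth or PL) embedded in $\mb{R}^{n+k}$ for $k$ much larger than $n$, with normal bundle $\nu \to M$.

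First, I would choose a closed tubular neighborhood $N$ of $M$ in $\mb{R}^{n+k}$: in the smooth category this is the standard tubular neighborhood theorem, while in the PL category it follows from regular neighborhood theory (and the block bundle / PL microbundle structure on $\nu$), giving a homeomorphism of pairs $(N, \partial N) \cong (D(\nu), S(\nu))$, where $D(\nu)$ and $S(\nu)$ denote the disk and sphere bundles. Compactifying $\mb{R}^{n+k}$ to $S^{n+k}$ and collapsing the complement of the interior of $N$ gives the Pontryagin--Thom collapse
\[
c \colon S^{n+k} \longrightarrow S^{n+k}/(S^{n+k}\smallsetminus \mathring{N}) \;\cong\; N/\partial N \;=\; T(\nu),
\]
where $T(\nu)$ is the Thom space of $\nu$, i.e.\ the mapping cone of the $S^{k-1}$-fibration $\pi\colon S(\nu) \to M$.

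Next I would verify that the homotopy class $[c] \in \pi_{n+k}(T(\nu))$ is the spherical class required by the definition of a Spivak normal fibration. By construction, the image of $[c]$ under the Hurewicz homomorphism is the fundamental class of the Thom space relative to a point, and a direct diagram chase identifies this, via the Thom isomorphism $\cup U \colon H^*(M;\mb{Z}) \xrightarrow{\cong} H^{*+k}(T(\nu);\mb{Z})$ (Poincaré-dualized), with the fundamental class $[M] \in H_n(M;\mb{Z})$. Since $H_{n+k}(T(\nu);\mb{Z}) \cong H_n(M;\mb{Z}) \cong \mb{Z}$ is generated by this image, every class in $H_{n+k}(T(\nu);\mb{Z})$ is spherical. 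Hence, taking $k$ large enough so that $k \gg n$, the sphere bundle $S(\nu)\to M$ satisfies the three defining conditions for a Spivak normal fibration of the underlying Poincaré complex $M$ listed after the statement of the Thom isomorphism in the introduction.

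Finally, I would appeal to Spivak's uniqueness theorem \cite{Spi1967}, which asserts that any two $S^{m-1}$-fibrations satisfying these conditions over a simply connected Poincaré complex are stably fiber homotopy equivalent. Applied to $S(\nu)$ and any chosen representative of the Spivak normal fibration of $M$, this yields the required stable fiber homotopy equivalence. The main technical point to be careful about is the PL case, where one needs the existence and uniqueness (up to PL isomorphism) of regular neighborhoods together with the identification of the collapse map's target with $T(\nu)$; once this is set up, the identification of $[c]$ with the required spherical generator is essentially formal via naturality of the Thom isomorphism and Poincaré duality.
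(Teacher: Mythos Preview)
Your argument is correct and is exactly the standard one: realize $S(\nu)$ as a Spivak normal fibration via the Pontryagin--Thom collapse, then invoke Spivak's uniqueness. The paper does not actually prove this lemma at all---it simply cites \cite[p.~32]{Wall1970}---so your proposal supplies precisely the argument that reference contains, making it consistent with (indeed more detailed than) the paper's treatment.
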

\begin{proof}
This lemma is referenced in \cite[pp.32]{Wall1970}.  
\end{proof}

Next, we use the Kervaire invariant to characterize the obstruction for the topological manifold $\mc{M}$ to be a smooth manifold.

\begin{lemma}\label{prelemma}
Let $n\ge 3$ be odd, $M$ be a framed $(n-1)$-connected $2n$-manifold with $H_n(M;\mb{Z})=\mb{Z}^2$. Assume that the topological Kervaire invariant is defined on $M$, and $\Phi^T(M)=1$, then its Kervaire invariant $\Phi(M)$ equals $1$. 
\end{lemma}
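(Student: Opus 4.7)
The plan is to identify the function $\varphi:H^n(M;\mb{Z}/2)\to\mb{Z}/2$ defining $\Phi^T(M)$ with Kervaire's classical quadratic refinement $\psi$ underlying $\Phi(M)$, and then to observe that both invariants are the Arf invariant of this common quadratic form on the rank-two symplectic space $H^n(M;\mb{Z}/2)$.

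First I would recall Kervaire's original construction in \cite{Kerva1960}. For a closed framed $(n-1)$-connected $2n$-manifold $M$ with $n$ odd, the quadratic refinement $\psi:H^n(M;\mb{Z}/2)\to \mb{Z}/2$ is defined by precisely the same recipe as $\varphi$ in Section \ref{TKIn}: given $l\in H^n(M;\mb{Z}/2)$, one chooses a map $f:M\to \Omega S^{n+1}$ with $f^\ast(u_n)=l$ and sets $\psi(l)=f^\ast(u_{2n})[M]_2$. The existence of such an $f$ is supplied by the framing of $M$: represent the Poincar\'e dual of $l$ by an embedded $n$-sphere (available since $n\geq 3$), use the framing of $M$ to trivialize its tubular neighborhood, and apply the Pontryagin--Thom construction to produce $f$.

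Next I would invoke the well-definedness lemma of Section \ref{TKIn}, which relies only on the absence of elements of odd Hopf invariant in $\pi_{2n+1}(S^{n+1})$ and makes no use of $M$ being a manifold. This lemma guarantees that $f^\ast(u_{2n})[M]_2$ is independent of the chosen lift $f$, so $\varphi=\psi$ as functions on $H^n(M;\mb{Z}/2)$. To finish, I would use that $M$ is $(n-1)$-connected with $H_n(M;\mb{Z})=\mb{Z}^2$ and $n$ odd, hence the mod-$2$ intersection form on $H^n(M;\mb{Z}/2)\cong (\mb{Z}/2)^2$ is the standard nondegenerate symplectic form and any basis $\{l_1,l_2\}$ is automatically symplectic. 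The Arf invariant of a quadratic refinement in rank two equals $\psi(l_1)\psi(l_2)$, so $\Phi(M)=\psi(l_1)\psi(l_2)=\varphi(l_1)\varphi(l_2)=\Phi^T(M)$, and the hypothesis $\Phi^T(M)=1$ yields $\Phi(M)=1$.

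The main obstacle is verifying that Kervaire's original geometric recipe for $\psi$---in which $f$ is built explicitly from the framing via an embedded representative of the Poincar\'e dual---produces the same value as the purely homotopy-theoretic $\varphi$, which accepts any map $f$ satisfying $f^\ast(u_n)=l$. The well-definedness lemma does most of the work here: once one exhibits any single $f$ coming from the framing, every other choice yields the same value, so $\varphi(l)=\psi(l)$ follows immediately. After this identification the equality of the two Arf invariants is formal.
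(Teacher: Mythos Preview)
Your argument is correct and takes a genuinely different, more direct route than the paper. The paper argues by contradiction: assuming $\Phi(M)=0$, it shows $M$ is framed cobordant to a homotopy sphere $\Sigma^{2n}$, then invokes Freedman's uniqueness theorem \cite{Freed} to conclude $M\#(-\Sigma^{2n})\cong S^n\times S^n$ as smooth manifolds, and the topological Poincar\'e conjecture to conclude $M$ is homeomorphic to $S^n\times S^n$; this contradicts $\Phi^T(S^n\times S^n)=0$ from Proposition~\ref{KervaofX}. You instead prove the stronger equality $\Phi^T(M)=\Phi(M)$ outright by observing that Kervaire's original 1960 quadratic refinement $\psi$ is defined by the very same recipe as the paper's $\varphi$, and that the well-definedness lemma in Section~\ref{TKIn} forces any two lifts $f:M\to\Omega S^{n+1}$ with the same $f^\ast(e_n)$ to yield the same value in $\mb{Z}/2$. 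Your approach is more elementary---it sidesteps Freedman's theorem and the Poincar\'e conjecture entirely---and yields a cleaner conclusion. The one point you should make explicit is a citation confirming that for a framed $(n-1)$-connected $2n$-manifold the surgery-theoretic Kervaire invariant of \cite{KervaMilnor, BrowW} (which is how the paper ultimately uses $\Phi$) agrees with the Arf invariant of Kervaire's 1960 quadratic form; this is standard, but the paper's introduction treats $\Phi$ as the framed-bordism surgery obstruction rather than via $\psi$ directly.
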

\begin{proof}
	Suppose, for contradiction, that $\Phi(M)=0$. Then $M$ is cobordant to a homotopy $2n$-sphere $\Sigma^{2n}$ in the framed bordism group $\Omega_{2n}^{\mm{fr}}$. Let $-\Sigma^{2n}$ be the homotopy $2n$-sphere $\Sigma^{2n}$ with opposite orientation. Then $M\#(-\Sigma^{2n})$ is cobordant to the  zero class in $\Omega_{2n}^{\mm{fr}}$. 
	Note that $S^n\times S^n$ with trivial framing represents the zero class in $\Omega_{2n}^{\mm{fr}}$. So, $M\#(-\Sigma^{2n})$ is cobordant to $S^n\times S^n$ in $\Omega_{2n}^{\mm{fr}}$. 
	
	By \cite[Theorem 1]{Freed}, $M\#(-\Sigma^{2n})$ is diffeomorphic to $S^n\times S^n$. By Poincar\'e conjecture, $M\cong M\# S^{2n}$ is homeomorphic to $M\#(-\Sigma^{2n})$. This implies   
 $\Phi^T(S^n\times S^n)=\Phi^T(M)=1$.

	However, Proposition \ref{KervaofX} shows $\Phi^T(S^n\times S^n)=0$. This is a contradiction, and thus finishes the proof.  
\end{proof}

We now prove (2) and (3) of Theorem A.
\begin{proof}[Proof of Theorem A (2)]
	Let $n\ne 1,3,7,15,31,63$ be odd. By Theorem A (1), the CW complex $(S_1^n\vee S_2^n)\cup_{([\iota_n^1,\iota_n^1],[\iota_n^2,\iota_n^2],[\iota_n^1,\iota_n^2])} e^{2n}$ is the homotopy type of a closed topological $(n-1)$-connected $2n$-manifold $\mc{M}$ with topological Kervaire invariant one. 

Assume for contradiction that $\mc{M}$ is a smooth manifold, with stable normal bundle $\nu$.

Recall that the map $\alpha^\mm{O}_{G}:\mm{BO}\to \mm{B}G$ induces a homomorphism
$[-,\mm{BO}]\to [-,\mm{B}G]$ which maps a stable vector bundle to a stable spherical fibration by deleting the $0$-section. The map $\alpha^\mm{O}_{G}$ factors as $\alpha^\mm{O}_\mm{PL}:\mm{BO}\to \mm{BPL}$ and $\alpha^\mm{PL}_G:\mm{BPL}\to \mm{B}G$ where $\mm{BPL}$ is the classifying space of stable PL bundles.

Let $\mc{M}^{(k)}$ be the $k$-skeleton of the homotopy type of $\mc{M}$. We first show that the restriction bundle $\nu|_{\mc{M}^{(n)}}$ is trivial. 

Let $f:\mc{M}^{(n)}\to \mm{BO}$ classify $\nu|_{\mc{M}^{(n)}}$. 
By Lemma \ref{Spivaknormal-manifold} and Proposition \ref{Spivak11}, the composition
	$$\alpha^{\mm{PL}}_G\circ \alpha^\mm{O}_{\mm{PL}}\circ f:\mc{M}^{(n)}\to \mm{B}G$$
	is null homotopic.	
	Let $G/\mm{PL}$ be the fiber of  $\alpha^{\mm{PL}}_G$. Note that $\mc{M}^{(n)}=S^n\vee S^n$ where $n$ is odd. Since $\pi_n(G/\mm{PL})=0$ (see \cite{KirbSieb1977}), the composition $\alpha^\mm{O}_{\mm{PL}}\circ f$ is null homotopic. 

By \cite{Hir1963,HirMaz}, $\alpha^{\mm{O}}_{\mm{PL}}:\mm{BO}\to \mm{BPL}$ induces monomorphisms on $\pi_i$ for $i\ge 5$. Since $\alpha^\mm{O}_{\mm{PL}}\circ f$ is null homotopic, $f$ is also null homotopic. This implies that $\nu|_{\mc{M}^{(n)}}$ is trivial. 

Now we show that $\mc{M}$ is a framed manifold.

  A connected smooth manifold $M$ is {\it almost parallelizable} if the restriction bundle of the stable normal bundle of $M$ on $M \backslash x_0$
   for a point $x_0\in M$ is stably trivial. Since $\nu|_{\mc{M}^{(n)}}$ is trivial, $\mc{M}$ is almost parallelizable. By \cite[Lemma 8.2]{Shen}, $\mc{M}$ is a framed manifold.
 
Thus, by Lemma \ref{prelemma}, $\Phi(\mc{M})=1$. However, known results \cite{BrowderW1969, MahT, BaJM, HiHR, LWX} show framed manifolds of Kervaire invariant one exist only in dimensions 
$2, 6, 14, 30, 62$, and $126$. This contradicts with $\mm{dim}(\mc{M})=2n$ with $n\ne 1,3,7,15,31,63$.  
\end{proof}  

\begin{proof}[Proof of Theorem A (3)]
Let $n=15,31,63$. By Theorem A (1) and \cite{KirSie1969}, the CW complex $(S_1^n\vee S_2^n)\cup_{([\iota_n^1,\iota_n^1],[\iota_n^2,\iota_n^2],[\iota_n^1,\iota_n^2])} e^{2n}$ is the homotopy type of a closed PL $(n-1)$-connected $2n$-manifold $\mc{M}$ with topological Kervaire invariant one.  

Let $\mathfrak n:\mc{M}\to \mm{BPL}$ be the classifying map for the stable normal bundle of $\mc{M}$. By Lemma \ref{Spivaknormal-manifold} and Proposition \ref{Spivak11}, the composition 
$$\alpha_{G}^{\mm{PL}}\circ \mathfrak n:\mc{M}\to \mm{BPL}\to  \mm{B}G$$
is null homotopic.

 By \cite{BoVo}, $G\mm{/PL}$, $\mm{BPL}$ and $\mm{B}G$ are 
infinite loop spaces. Let $\mm{i}$ be the fiber inclusion $G\mm{/PL}\to \mm{BPL}$. By the same argument as in the proof of \cite[Lemma 3.1]{Shen}, there exists a map $\mathfrak f:\mc{M}\to G/\mm{PL}$ such that $\mm{i}\circ \mathfrak f$ is homotopic to $\mathfrak n$.
 
Since $\mc{M}^{(n)}=S^n\vee S^n$ and $\pi_n(G/\mm{PL})=0$, the map $\mathfrak f$ is homotopic to the composition
$\mc{M}\stackrel{\mm{Col}}{\longrightarrow}S^{2n}\stackrel{f}{\longrightarrow} G/\mm{PL}$
where $\mm{Col}:\mc{M}\to S^{2n}$ collapses $\mc{M}^{(n)}$ to a point.
	Consequently, $\mathfrak n$ is homotopic to the  composition
\begin{equation}
		\mc{M}\stackrel{\mm{Col}}{\longrightarrow}S^{2n}\stackrel{f}{\longrightarrow} G/\mm{PL}\stackrel{\mm{i}}{\longrightarrow}\mm{BPL} \label{classifymapforM}
	\end{equation}
	
	Consider the following commutative diagram with exact rows
	\[
\xymatrix@C=2cm{
\pi_k(G/\mm{O})\ar[d]\ar[r]^-{}&\pi_k(G/\mm{PL})  \ar[d]^-{\mm{i}_\ast}\ar[r]^-{\partial}&\pi_{k-1}(\mm{PL/O}) \ar[d]^-{\cong}\\
\pi_{k}(\mm{BO}) \ar[r]^-{\alpha_{\mm{PL}\ast}^{\mm{O}}}&\pi_k(\mm{BPL}) \ar[r]^-{\mathfrak o}& \pi_{k-1}(\mm{PL/O})}
\]
where $\mm{PL/O}$ is the fiber of $\alpha_{\mm{PL}}^{\mm{O}}: \mm{BO}\to \mm{BPL}$. 


As noted in \cite{Bru1968}, $\mm{Im}(\partial)=bP_{k}\subset \Theta_{k-1}\cong \pi_{k-1}(\mm{PL/O})$ for $k\ge 6$, where $bP_k$
 denotes the group of homotopy spheres that bound parallelizable manifolds, and $\Theta_{k-1}$ is the group of diffeomorphism classes of homotopy $(k-1)$-spheres. By \cite{KervaMilnor}, for $k\equiv 2\pmod 4$ and $k\ge 6$, there exists an exact sequence
$$0\to bP_{k+1}\to \Theta_{k}\to \pi_k^s/J\stackrel{\Phi}{\to }\mb{Z}/2\to bP_k\to 0$$	
 where $\Phi$ is the Kervaire invariant. 
 
 By \cite{MahT,BaJM,LWX}, for $k=2n$, $\Phi$ is nontrivial, thus $bP_{2n}=0$. This implies that $\partial$ is trivial for $k=2n$.
 
 Since $2n\equiv 6\pmod 8$, $\pi_{2n}(\mm{BO})=0$. Thus $\mathfrak o$ is injective for $k=2n$. Hence, $\partial=\mathfrak o\circ \mm{i}_\ast$ implies that $\mm{i}_\ast$ is trivial for $k=2n$.

Therefore, the composition \eqref{classifymapforM} induces that the classifying map $\mathfrak n$ of the stable PL normal bundle of $\mc{M}$ is null homotopic. Consequently, $\mc{M}$ is a framed manifold with topological Kervaire invariant one. By Lemma \ref{prelemma}, $\Phi(\mc{M})=1$.	 
\end{proof}

Finally, we prove Theorem B.
\begin{proof}[Proof of Theorem B]
	Let $n=15,31,63$. By Theorem A (3), the CW complex $X$ defined in Theorem A is homotopy equivalent to a closed $(n-1)$-connected framed $2n$-manifold $M$ with $\Phi(M)=1$. Moreover, $\mm{rank}(H_n(M;\mb{Z}))=2$.
	
	Recall that the Kervaire invariant is additive with respect to the connected sum of framed manifolds.
	
	Let $N$ be another closed $(n-1)$-connected framed $2n$-manifold with $\Phi(N)=1$ and $\mm{rank}(H_n(N;\mb{Z}))=2s$ ($s>0$). Then we have
	$$\Phi((\#_{i=1}^{s-1}S_i^n\times S_i^n)\#M\#(-N))=0$$
	where $\#_{i=1}^{s-1}S_i^n\times S_i^n$ is equipped with the trivial framing.
	By the definition of the Kervaire invariant, $(\#_{i=1}^{s-1}S_i^n\times S_i^n)\#M\#(-N)$ is cobordant to a homotopy $2n$-sphere $\Sigma^{2n}$ in $\Omega_{2n}^{\mm{fr}}$, which implies  $(\#_{i=1}^{s-1}S_i^n\times S_i^n)\#M$ is cobordant to $N\# \Sigma^{2n}$ in $\Omega_{2n}^{\mm{fr}}$. 
	
	By \cite[Theorem 1]{Freed}, $(\#_{i=1}^{s-1}S_i^n\times S_i^n)\#M$ is diffeomorphic to $N\# \Sigma^{2n}$, and thus $(\#_{i=1}^{s-1}S_i^n\times S_i^n)\#M$ is homeomorphic to $N$. 
\end{proof}

\end{document}